\documentclass[11pt]{article}

\usepackage[a4paper,margin=1in]{geometry}
\usepackage[T1]{fontenc}
\usepackage[utf8]{inputenc}
\usepackage{lmodern}
\usepackage{microtype}
\usepackage{amsmath,amssymb,amsthm,mathtools}
\usepackage{booktabs}
\usepackage{enumitem}
\usepackage{xcolor}
\usepackage[colorlinks=true,linkcolor=blue,citecolor=blue,urlcolor=blue]{hyperref}
\usepackage[nameinlink,noabbrev]{cleveref}

\newtheorem{theorem}{Theorem}[section]
\newtheorem{proposition}[theorem]{Proposition}

\newtheorem{corollary}[theorem]{Corollary}
\theoremstyle{definition}
\newtheorem{definition}[theorem]{Definition}
\newtheorem{problem}[theorem]{Problem}
\newtheorem{conjecture}[theorem]{Conjecture}
\theoremstyle{remark}

\newcommand{\Cl}{\mathrm{Cl}}
\newcommand{\dimloc}{\dim_{\mathrm{loc}}}
\newcommand{\Rem}{\mathrm{Rem}}
\newcommand{\Add}{\mathrm{Add}}
\newcommand{\Fw}{\mathrm{Fw}}
\newcommand{\Ax}{\mathrm{Ax}}
\newcommand{\Sp}{\mathrm{Sp}}

\title{Simplex Stratification and Phase Boundaries in the Partition Graph}
\author{Fedor B. Lyudogovskiy}
\date{}

\begin{document}

\maketitle

\begin{abstract}
We study the partition graph $G_n$, whose vertices are the integer partitions of $n$ and whose
edges correspond to elementary transfers of one unit between parts. We introduce the simplex
stratification of $G_n$: for each vertex $\lambda$, let $\dimloc(\lambda)$ denote the largest
dimension of a simplex of the clique complex $K_n=\Cl(G_n)$ containing $\lambda$. This defines a
decomposition of $V(G_n)$ into layers
\[
L_r(n)=\{\lambda\in V(G_n):\dimloc(\lambda)=r\}.
\]

We formalize the graph-theoretic interfaces between consecutive layers, called phase boundaries,
and study the associated interface graphs and boundary thresholds. Using the previously established
star/top description of cliques through a fixed vertex, we show that $\dimloc(\lambda)$ is
determined exactly by the maximal star and top capacities through $\lambda$. This yields explicit
local criteria for membership in higher simplex layers and reformulates their first appearance in
terms of local star/top capacity thresholds.

We also present an exhaustive computational study for $n\le 30$, including exact-layer thresholds,
boundary thresholds, selected layer profiles, and the behaviour of the boundary framework. The
computations suggest a rigid threshold pattern related to staircase partitions and their one-cell
extensions, while the corresponding global statements are left as conjectures and open problems.
\end{abstract}

\noindent\textbf{Keywords:} partition graph, integer partitions, clique complex, simplex stratification, phase boundaries, local clique structure.

\noindent\textbf{MSC 2020:} 05A17, 05C69, 05C75, 05E45.

\section{Introduction}

For each $n$, let $G_n$ be the graph whose vertices are the integer partitions of $n$, with two
partitions adjacent whenever one is obtained from the other by an elementary transfer of one unit
between two parts, followed by reordering. This graph carries a rich local combinatorial structure
while also exhibiting a well-defined large-scale geometry. It also belongs to the broader
combinatorial setting of Gray-code generation and minimal-change enumeration for partitions;
see, for example, \cite{Savage,RSW,Mutze}.

Several aspects of this geometry have already been studied. The local clique structure through a
fixed vertex and the associated invariants were described in terms of ordered local transfer data
\cite{partition-local}. Globally, the partition graph was investigated as a growing discrete
geometric object, with particular attention to the boundary framework, the central region, simplex
layers, and the contrast between increasing local complexity and comparatively simple global
topology \cite{partition-growth}. More recently, the axial morphology of $G_n$ was formalized
through the self-conjugate axis, the spine, and the corresponding concentration radii
\cite{partition-axial}. The topological background for the present paper is provided by the result
that the clique complex $K_n=\Cl(G_n)$ is always homotopy equivalent to a wedge of $2$-spheres
\cite{partition-topology}. For related clique-complex results in other graph settings, and for a
different partition-based graph model, see \cite{Adamaszek,GoyalShuklaSingh,Bal}.

This paper isolates and studies one specific aspect of this picture: the global stratification of
$G_n$ by local simplex dimension. For a vertex $\lambda\in V(G_n)$, we
write $\dimloc(\lambda)$ for the largest dimension of a simplex of the clique complex $K_n$
containing $\lambda$, and we define the $r$-th simplex layer by
\[
L_r(n):=\{\lambda\in V(G_n):\dimloc(\lambda)=r\}.
\]
Thus the vertex set of $G_n$ is partitioned according to the largest simplex that contains each
vertex. In geometric terms, this records where the graph supports only edges, where triangles first
appear, where tetrahedral behaviour begins, and so on.

Our first goal is to place this stratification on a precise structural footing. We introduce the
exact graph-theoretic interfaces between consecutive layers,
\[
\partial_{r,r+1}(n),
\]
which we call the phase boundaries of the simplex stratification. These are the loci of the
edges across which the local simplex dimension changes from $r$ to $r+1$. The term ``phase
boundary'' is therefore used here in a purely combinatorial sense: it refers to transitions between
adjacent regimes of local simplex complexity inside the same graph.

Our second goal is to connect this global stratification to the already established local theory.
Using the star/top description of cliques through a fixed vertex, we show that the local simplex
dimension is controlled exactly by two local capacity parameters: the largest full star-simplex and
the largest full top-simplex through the vertex. This yields an explicit local criterion for
membership in $L_r(n)$ and $L_{\ge r}(n)$, and reformulates the first appearance of higher simplex
layers in terms of sufficiently large local star/top capacities. For convenience, we also restate
in Section~3 the precise imported star/top structure from \cite{partition-topology} needed for the
proof of the capacity formula.

The paper also contains a first computational study of the resulting stratification. For $n\le 30$,
the computations suggest a threshold pattern for the first appearance of higher layers, closely
related to staircase partitions and their one-cell extensions. They also show that the outermost
layer remains extremely thin, while the boundary framework appears to support only the first two
simplex layers. At the same time, we deliberately keep the distinction between proved results and
numerical observations sharp: several natural geometric claims about inward migration of high layers
and of their boundaries are formulated only as conjectures and open problems.

The main contributions of the paper may be summarized as follows.

\begin{enumerate}[leftmargin=2em]
\item We formalize the simplex-layer stratification
\[
V(G_n)=\bigsqcup_{r=0}^{\Delta(n)} L_r(n)
\]
and introduce the corresponding phase boundaries and interface graphs.

\item We prove that local simplex dimension is determined exactly by the maximum of the local star
and top capacities, giving a precise local criterion for the higher simplex layers.

\item We reduce the threshold problem for the first appearance of a new layer to the first occurrence
of a prescribed local capacity pattern.

\item We provide exhaustive computations for $n\le 30$, including layer thresholds, boundary
thresholds, first-occurrence families, and a first geometric analysis of the boundary framework.
\end{enumerate}

The paper proceeds from local control to global organization. After fixing the simplex-layer
language, we show that the local simplex dimension is determined exactly by star and top capacities.
We then use this to define and study the graph-theoretic interfaces between consecutive layers, and
finally turn to the threshold data and the staircase-based patterns suggested by the computation.

\section{Preliminaries and simplex-layer language}

We begin by establishing the terminology for simplex stratification. This section is purely
organizational: we introduce the layer decomposition, the associated threshold functions, and
the graph-theoretic interfaces between consecutive layers, without yet using the detailed local
clique theory.

We write $G_n$ for the partition graph of $n$: its vertices are the integer partitions of $n$, and
two vertices are adjacent if one can be obtained from the other by an elementary transfer of one
unit between two parts, followed by reordering.

Let
\[
K_n:=\Cl(G_n)
\]
be the clique complex of $G_n$.

\begin{definition}[Local simplex dimension]
For a vertex $\lambda\in V(G_n)$, define
\[
\dimloc(\lambda)
:=
\max\{\dim \sigma:\sigma \text{ is a simplex of } K_n \text{ and } \lambda\in \sigma\}.
\]
Equivalently, $\dimloc(\lambda)=d$ if and only if the largest clique of $G_n$ containing
$\lambda$ has cardinality $d+1$.
\end{definition}

\begin{definition}[Simplex layers]
For each integer $r\ge 0$, define
\[
L_r(n):=\{\lambda\in V(G_n):\dimloc(\lambda)=r\},
\qquad
L_{\ge r}(n):=\{\lambda\in V(G_n):\dimloc(\lambda)\ge r\}.
\]
\end{definition}

\begin{definition}[Maximal local simplex dimension]
Define
\[
\Delta(n):=\max_{\lambda\vdash n}\dimloc(\lambda).
\]
\end{definition}

\begin{definition}[Thresholds]
For each $r\ge 0$, define
\[
\tau_{\ge}(r):=\min\{n:L_{\ge r}(n)\neq\varnothing\},
\]
whenever the set is nonempty, and
\[
\tau(r):=\min\{n:L_r(n)\neq\varnothing\}.
\]
\end{definition}

\begin{definition}[Phase boundaries]
For $r\ge 0$, define
\[
B_r(n):=\{\lambda\in L_r(n):N_{G_n}(\lambda)\cap L_{r+1}(n)\neq\varnothing\},
\]
and let
\[
B^-_{r+1}(n):=\{\lambda\in L_{r+1}(n):N_{G_n}(\lambda)\cap L_r(n)\neq\varnothing\}.
\]
The \emph{phase boundary} between $L_r(n)$ and $L_{r+1}(n)$ is the set
\[
\partial_{r,r+1}(n):=B_r(n)\cup B^-_{r+1}(n).
\]
Equivalently,
\[
\partial_{r,r+1}(n)
=
\{\lambda\in L_r(n)\cup L_{r+1}(n):N_{G_n}(\lambda)\cap L_{\mathrm{other}}\neq\varnothing\},
\]
where $L_{\mathrm{other}}$ denotes the opposite layer among $L_r(n)$ and $L_{r+1}(n)$.
\end{definition}

\begin{proposition}[Layer partition]
For every $n\ge 1$, the sets
\[
L_0(n),L_1(n),\dots,L_{\Delta(n)}(n)
\]
form a partition of $V(G_n)$. Moreover,
\[
L_{\ge r}(n)=\bigsqcup_{s\ge r}L_s(n).
\]
\end{proposition}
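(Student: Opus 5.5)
The plan is to show that $\dimloc$ is a well-defined function $V(G_n)\to\{0,1,\dots,\Delta(n)\}$; the partition statement then follows formally, since the layers are its fibres.

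\emph{Well-definedness.} First, $V(G_n)$ is finite and nonempty for $n\ge 1$: the number of partitions $p(n)$ is finite, and $(n)$ is a partition. For each vertex $\lambda$, the singleton $\{\lambda\}$ is a clique, i.e.\ a $0$-simplex of $K_n$ containing $\lambda$, so the set of dimensions in the definition of $\dimloc(\lambda)$ is nonempty. It is also bounded above by $p(n)-1$, because every clique has at most $p(n)$ vertices. Hence the maximum exists, and $\dimloc(\lambda)$ is an integer with $0\le\dimloc(\lambda)$.

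\emph{Range.} Since $V(G_n)$ is finite and nonempty, $\Delta(n)=\max_\lambda\dimloc(\lambda)$ exists. By definition, $\dimloc(\lambda)\le\Delta(n)$ for every $\lambda$, so every value of $\dimloc$ lies in $\{0,\dots,\Delta(n)\}$.

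\emph{Partition property.} Each vertex $\lambda$ lies in $L_{\dimloc(\lambda)}(n)$, which is one of the listed sets. A vertex cannot lie in two distinct layers $L_r(n)$ and $L_s(n)$, since $\dimloc$ takes a single value at $\lambda$. So the sets cover $V(G_n)$ and are pairwise disjoint. Note that some $L_r(n)$ with $r<\Delta(n)$ might be empty. I will read ``partition'' in the sense of a disjoint cover indexed by $r$, adding a remark that empty layers are allowed. The top layer $L_{\Delta(n)}(n)$ is nonempty because the maximum is attained.

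\emph{The identity for $L_{\ge r}(n)$.} We have $\lambda\in L_{\ge r}(n)$ if and only if $\dimloc(\lambda)=s$ for some $s\ge r$, that is, if and only if $\lambda\in L_s(n)$ for some $s\ge r$. Disjointness of this union is inherited from the partition, and only $s\le\Delta(n)$ contribute.

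There is no real obstacle here. The only point needing care is the convention about possibly empty layers.
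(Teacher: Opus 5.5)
Your proof is correct and takes the same approach as the paper: the layers are the fibres of the single-valued function $\dimloc$, so each vertex lies in exactly one layer, and the identity for $L_{\ge r}(n)$ follows directly from the definitions. Your extra checks (existence of the maximum, and allowing empty blocks, which is genuinely needed since $L_0(n)=\varnothing$ for $n\ge 2$ by Proposition~\ref{prop:zero-layer}) spell out what the paper leaves implicit.
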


\begin{proof}
Every vertex has a unique value of $\dimloc$, so it belongs to exactly one layer. The description
of $L_{\ge r}(n)$ as the disjoint union of all higher layers then follows directly from the
definition.
\end{proof}

\begin{proposition}[Conjugation invariance]
Let $\lambda^\ast$ denote the conjugate partition of $\lambda$. Then for every $n$ and every
$\lambda\vdash n$,
\[
\dimloc(\lambda^\ast)=\dimloc(\lambda).
\]
Consequently, for every $r\ge 0$, the sets $L_r(n)$, $L_{\ge r}(n)$, $B_r(n)$, $B^-_{r+1}(n)$, and
$\partial_{r,r+1}(n)$ are conjugation-invariant.
\end{proposition}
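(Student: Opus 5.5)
The plan is to show that conjugation $\lambda\mapsto\lambda^\ast$ is a graph automorphism of $G_n$. Both claims then follow formally. An automorphism maps cliques to cliques of the same cardinality, so it induces a simplicial automorphism of $K_n=\Cl(G_n)$ preserving dimension. Since it is also a bijection on vertices, the largest clique through $\lambda^\ast$ has the same size as the largest clique through $\lambda$, which gives $\dimloc(\lambda^\ast)=\dimloc(\lambda)$.

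The key step is the automorphism property. I would use the Young-diagram description of an elementary transfer. Moving one unit from a part $a$ to a part $b$ (with $a\ge b+1$, and $b$ possibly $0$), followed by reordering, amounts to the following operation on the diagram: remove one corner cell (from the last row of length $a$) and add one addable cell (at the end of the first row of length $b$). The result must be a different partition, so the case $a=b+1$, which returns $\lambda$ itself, is excluded.

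Conversely, suppose $\mu$ is obtained from $\lambda$ by removing a corner cell and adding an addable cell of the resulting diagram, with $\mu\ne\lambda$. Then $\mu$ arises from $\lambda$ by a one-unit transfer between two parts. Thus $\lambda\sim\mu$ in $G_n$ if and only if $\lambda\ne\mu$ and $|\lambda\cap\mu|=n-1$ as sets of cells, that is, the diagrams differ by moving exactly one cell. This condition is manifestly invariant under transposing diagrams, since transposition is a bijection of cells carrying $\lambda$ to $\lambda^\ast$. Hence $\lambda\sim\mu$ if and only if $\lambda^\ast\sim\mu^\ast$, and since conjugation is an involution on partitions of $n$, it is an automorphism.

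This equivalence is the only step requiring care, and it is the main obstacle. One must check two things: that reordering after a transfer never changes more than one cell, and that any pair of diagrams sharing $n-1$ cells is realized by a transfer. Both follow by viewing the multiset of parts before and after. Removing a cell decreases one part by $1$ and the diagram remains a partition. Adding a cell increases one part by $1$. Together these are exactly a transfer.

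For the second statement, note that the sets $L_r(n)$ and $L_{\ge r}(n)$ are defined purely through $\dimloc$, so they are conjugation-invariant by the first part. For $B_r(n)$, suppose $\lambda\in L_r(n)$ has a neighbour $\nu\in L_{r+1}(n)$. Then $\lambda^\ast\in L_r(n)$, the partition $\nu^\ast$ is adjacent to $\lambda^\ast$ by the automorphism property, and $\nu^\ast\in L_{r+1}(n)$. Hence $\lambda^\ast\in B_r(n)$. The same argument applies to $B^-_{r+1}(n)$. Since $\partial_{r,r+1}(n)$ is their union, it is invariant as well.
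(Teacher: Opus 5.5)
Your proposal is correct and follows the paper's route: show that conjugation is an automorphism of $G_n$, hence a dimension-preserving automorphism of $K_n$, then read off the invariance of $\dimloc$ and of the layer and boundary sets. Your cell-set criterion ($\lambda\sim\mu$ iff $\lambda\neq\mu$ and the two diagrams share $n-1$ cells) is a more explicit version of the paper's one-line remark that a row transfer becomes the corresponding column transfer. The restriction $a\ge b+1$ in your forward direction is harmless: adjacency is symmetric, and a transfer from a smaller to a larger part is the reverse of a transfer in your regime.
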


\begin{proof}
Conjugation of Ferrers diagrams preserves the size of the partition and converts an elementary
transfer between two rows into the corresponding elementary transfer between the conjugate columns.
Hence conjugation is an automorphism of $G_n$, and therefore also of the clique complex $K_n$. Thus simplices through $\lambda$ are sent bijectively to simplices of the same dimension
through $\lambda^\ast$, which proves $\dimloc(\lambda^\ast)=\dimloc(\lambda)$. The invariance of
the layer and boundary sets then follows immediately from the definitions.
\end{proof}

\begin{proposition}[Threshold reformulation]
For every $n$ and $r\ge 0$, the following are equivalent:
\begin{enumerate}[label=\textup{(\roman*)},leftmargin=2em]
\item $L_{\ge r}(n)\neq\varnothing$;
\item $\Delta(n)\ge r$.
\end{enumerate}
Hence
\[
\tau_{\ge}(r)=\min\{n:\Delta(n)\ge r\}.
\]
\end{proposition}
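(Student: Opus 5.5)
The argument unwinds the definitions of $L_{\ge r}(n)$ and $\Delta(n)$. Since $V(G_n)$ is finite and nonempty for every $n\ge 1$, the maximum defining $\Delta(n)$ is attained. Also $\dimloc(\lambda)$ is a well-defined nonnegative integer for each $\lambda$: every vertex lies in the $0$-simplex $\{\lambda\}$, and cliques have at most $|V(G_n)|$ elements. We then prove the two implications separately.

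For (i)$\Rightarrow$(ii), we pick $\lambda\in L_{\ge r}(n)$. Then $\dimloc(\lambda)\ge r$, and by the definition of $\Delta(n)$ as a maximum over all $\lambda\vdash n$ we get $\Delta(n)\ge\dimloc(\lambda)\ge r$.

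For (ii)$\Rightarrow$(i), we choose a vertex $\mu\vdash n$ at which the maximum is attained, so that $\dimloc(\mu)=\Delta(n)\ge r$. Then $\mu\in L_{\ge r}(n)$, and this set is nonempty.

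The formula for the threshold then follows by substitution. The sets $\{n:L_{\ge r}(n)\neq\varnothing\}$ and $\{n:\Delta(n)\ge r\}$ coincide by the equivalence just proved, so their minima agree, with the same convention that $\tau_{\ge}(r)$ is defined only when the set is nonempty. There is no real obstacle here. The only point needing care is that attainment of the maximum uses finiteness of $V(G_n)$, and this is exactly what makes (ii)$\Rightarrow$(i) work.
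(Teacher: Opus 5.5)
Your proposal is correct and follows the paper's own argument, which simply states that the equivalence is immediate from the definition of $\Delta(n)$. You have only written out the two implications explicitly, including the point that the maximum defining $\Delta(n)$ is attained because $V(G_n)$ is finite.
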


\begin{proof}
This is immediate from the definition of $\Delta(n)$.
\end{proof}

At this stage the simplex-layer formalism is fully defined, but still abstract. The next step is to
connect it to the local clique structure of the partition graph and to obtain an explicit local
criterion for membership in the higher layers.

\section{Local capacity control of simplex layers}

The main structural input of the paper is that the simplex stratification, although defined globally,
is controlled by explicit local capacity parameters. More precisely, the local simplex dimension is
determined by the largest full star-simplex or full top-simplex through the vertex.

\subsection{The zero layer and the first nontrivial level}

\begin{proposition}\label{prop:zero-layer}
One has
\[
L_0(1)=\{(1)\},
\qquad
L_0(n)=\varnothing \quad \text{for all } n\ge 2.
\]
Equivalently,
\[
\Delta(1)=0,
\qquad
\Delta(n)\ge 1 \quad \text{for all } n\ge 2.
\]
Hence
\[
\tau(0)=1,
\qquad
\tau_{\ge}(1)=2.
\]
\end{proposition}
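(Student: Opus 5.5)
The plan is to reduce the statement to one observation: for $n\ge 2$, every vertex of $G_n$ has at least one neighbour. Then every vertex lies on an edge, i.e.\ a $1$-simplex of $K_n$, so $\dimloc(\lambda)\ge 1$ and $L_0(n)=\varnothing$. The case $n=1$ is handled directly. $G_1$ has the single vertex $(1)$ and no edges, because a transfer needs two parts, and $(1)$ has only one part while no other partition of $1$ exists. Hence the only simplex through $(1)$ is the vertex itself, so $\dimloc((1))=0$, $L_0(1)=\{(1)\}$ and $\Delta(1)=0$.

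For $n\ge 2$ we exhibit an explicit neighbour for each $\lambda=(\lambda_1,\dots,\lambda_k)\vdash n$, splitting into two cases.

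\emph{Case $k\ge 2$.} Move one unit from the last part $\lambda_k$ to the first part $\lambda_1$. If $\lambda_k=1$, the part disappears; I will read the elementary transfer as allowing a part to become zero and be deleted, which is consistent with the convention of \cite{partition-local}. The result is a partition of $n$ whose largest part is $\lambda_1+1$, so it differs from $\lambda$.

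\emph{Case $k=1$.} Here $\lambda=(n)$ with $n\ge 2$. Transfer one unit from the single part to a new part of size $0$, giving $(n-1,1)$; equivalently, $(n)$ is the neighbour of $(n-1,1)$ from the previous case.

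The one point needing care is this convention: that the adjacency in $G_n$ permits creating or annihilating a part of size $1$. Without it, $G_n$ would not even connect $(n)$ to anything. I will state explicitly that this is the standard reading of "elementary transfer," under which $(n)\sim(n-1,1)$.

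Having $\dimloc(\lambda)\ge1$ for all $\lambda\vdash n$ with $n\ge2$ gives $L_0(n)=\varnothing$ and $\Delta(n)\ge 1$ for $n\ge2$. By the Layer partition proposition and the Threshold reformulation, the thresholds follow:
\begin{itemize}
\item $\tau(0)=1$, since $L_0(1)\neq\varnothing$;
\item $\tau_{\ge}(1)=\min\{n:\Delta(n)\ge1\}=2$, since $\Delta(1)=0$ and $\Delta(2)\ge1$.
\end{itemize}
There is no serious obstacle beyond fixing the convention on zero parts.
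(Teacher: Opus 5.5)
Your proof is correct and follows the paper's approach: treat $n=1$ directly, and for $n\ge 2$ exhibit an explicit neighbour of every vertex, so that every vertex lies on a $1$-simplex of $K_n$. The only difference is cosmetic: the paper splits on whether $\lambda=(1^n)$ (splitting a unit off a part of size $\ge 2$ into a new row, or merging two parts of size $1$), whereas you split on the number of parts. Both arguments rely on the same convention that a transfer may create or delete a part.
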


\begin{proof}
For $n=1$, the graph $G_1$ consists of a single isolated vertex, so its local simplex dimension is
$0$.

Now let $n\ge 2$, and let $\lambda\vdash n$. Then $\lambda$ has at least one neighbor in $G_n$.
Indeed, if $\lambda\neq(1^n)$, then some part of $\lambda$ has size at least $2$; moving one unit
from such a part to a new row produces a distinct partition of $n$, and hence a neighbor of
$\lambda$ in $G_n$. If $\lambda=(1^n)$, one may transfer one unit from one part of size $1$ to
another, obtaining $(2,1^{n-2})$. Thus every vertex belongs to at least one edge of $G_n$ and
therefore lies in a $1$-simplex of $K_n$. Therefore $L_0(n)=\varnothing$ for $n\ge 2$.
\end{proof}

\subsection{Star and top capacities}

The local clique theory of \cite{partition-topology,partition-local} organizes cliques through a
fixed vertex by two canonical mechanisms, called star and top simplices. We briefly recall the
minimal notation needed here. We identify a partition with its Ferrers diagram. For a partition
$\lambda$, let $\Rem(\lambda)$ and $\Add(\lambda)$ denote the sets of removable and addable
corners of its Ferrers diagram. If $c\in\Rem(\lambda)$ and $a\in\Add(\lambda)$, we write
$\lambda(c\to a)$ for the diagram obtained by removing the cell $c$ and adding the cell $a$; the
transfer is called \emph{admissible} if the resulting diagram is again a partition and is different from
$\lambda$.

\begin{definition}\label{def:capacities}
Let $\lambda\vdash n$. If $n=1$, set
\[
s((1))=t((1))=0.
\]
Assume now that $n\ge 2$.

\begin{enumerate}[label=\textup{(\roman*)},leftmargin=2em]
\item For a removable corner $c$ of $\lambda$, let
\[
A_{\max}(\lambda,c):=\{a\in\Add(\lambda):\lambda(c\to a)\text{ is admissible}\},
\]
and define the corresponding full star-simplex by
\[
\Sigma^{\mathrm{star}}_{\max}(\lambda,c)
:=
\{\lambda\}\cup\{\lambda(c\to a):a\in A_{\max}(\lambda,c)\}.
\]
The \emph{star capacity} of $\lambda$ is
\[
s(\lambda):=\max_{c\in\Rem(\lambda)}|A_{\max}(\lambda,c)|.
\]

\item For an addable corner $a$ of $\lambda$, let
\[
C_{\max}(\lambda,a):=\{c\in\Rem(\lambda):\lambda(c\to a)\text{ is admissible}\},
\]
and define the corresponding full top-simplex by
\[
\Sigma^{\mathrm{top}}_{\max}(\lambda,a)
:=
\{\lambda\}\cup\{\lambda(c\to a):c\in C_{\max}(\lambda,a)\}.
\]
The \emph{top capacity} of $\lambda$ is
\[
t(\lambda):=\max_{a\in\Add(\lambda)}|C_{\max}(\lambda,a)|.
\]
\end{enumerate}
\end{definition}

These are exactly the full star- and full top-simplices of \cite[Def.~2.5]{partition-topology}. Here $A_{\max}(\lambda,c)$ consists of the addable corners to which the corner $c$ can be moved admissibly, and similarly $C_{\max}(\lambda,a)$ consists of the removable corners that can be moved admissibly to $a$. The simplex $\Sigma^{\mathrm{star}}_{\max}(\lambda,c)$ has dimension $|A_{\max}(\lambda,c)|$, since it has $1+|A_{\max}(\lambda,c)|$ vertices, and similarly $\Sigma^{\mathrm{top}}_{\max}(\lambda,a)$ has dimension $|C_{\max}(\lambda,a)|$.

\begin{proposition}[Imported star/top structure]\label{prop:imported-star-top}
Let $\lambda\vdash n$. Then:
\begin{enumerate}[label=\textup{(\roman*)},leftmargin=2em]
\item for every $c\in\Rem(\lambda)$, the set $\Sigma^{\mathrm{star}}_{\max}(\lambda,c)$ is a simplex of $K_n$;
\item for every $a\in\Add(\lambda)$, the set $\Sigma^{\mathrm{top}}_{\max}(\lambda,a)$ is a simplex of $K_n$;
\item every simplex of $K_n$ containing $\lambda$ is contained in a star-simplex or a top-simplex through $\lambda$;
\item the maximal simplices of these two families are exactly the full star- and full top-simplices.
\end{enumerate}
All four statements are proved in \cite[Def.~2.5, Thm.~3.6, Props.~3.7 and~3.8]{partition-topology}.
\end{proposition}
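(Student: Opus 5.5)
The plan is to reduce everything to a description of the neighbourhood of $\lambda$ in terms of corner transfers, and then to compare two neighbours via the symmetric difference of their Ferrers diagrams.

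\emph{Step 1 (neighbours as corner moves).} First I will check that an elementary transfer of one unit between two parts, followed by reordering, is the same as removing one cell and adding one cell to the Ferrers diagram so that the result is again a partition. After reordering, the unit leaves the last cell of some row of the original length and arrives at the end of some row. Hence the removed cell is a removable corner $c\in\Rem(\lambda)$ and the added cell is an addable corner $a\in\Add(\lambda)$. So
\[
N_{G_n}(\lambda)=\{\lambda(c\to a): \lambda(c\to a)\text{ admissible}\}.
\]
The pair $(c,a)$ is uniquely determined by the neighbour $\mu$: the diagrams of $\lambda$ and $\mu$ differ in exactly the two cells $c$ and $a$. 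Consequently, two partitions of $n$ are adjacent if and only if their diagrams have symmetric difference of exactly two cells.

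\emph{Step 2 (parts (i) and (ii)).} Take two members $\lambda(c\to a)$ and $\lambda(c\to a')$ of $\Sigma^{\mathrm{star}}_{\max}(\lambda,c)$ with $a\ne a'$. Their diagrams differ exactly in the cells $a$ and $a'$. Both are partitions of $n$, and they are distinct. By Step 1 they are therefore adjacent: one is obtained from the other by moving the cell $a$ to $a'$. Each of them is also adjacent to $\lambda$ by definition. So $\Sigma^{\mathrm{star}}_{\max}(\lambda,c)$ is a clique, hence a simplex of $K_n$. The top case is identical, with $c,c'$ in place of $a,a'$. The point that needs care is that no separate check of addability of $a'$ in $\lambda(c\to a)$ is required. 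The symmetric-difference criterion applies directly, because both diagrams are already known to be partitions.

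\emph{Step 3 (part (iii)).} Let $\sigma$ be a clique containing $\lambda$. Write its other vertices as $\lambda(c_i\to a_i)$, with pairwise distinct pairs $(c_i,a_i)$. Suppose two of them have $c_i\ne c_j$ and $a_i\ne a_j$. Then their diagrams differ in the four cells $c_i,c_j,a_i,a_j$. These are distinct cells, since removable cells lie in $\lambda$ and addable ones do not. By Step 1 the two vertices are not adjacent, a contradiction. So any two pairs share a coordinate.

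A family of pairs that pairwise share a coordinate either has a common $c$ or a common $a$. Indeed, suppose $(c,a)$, $(c,a')$ and $(c',a'')$ occur with $c'\ne c$. Then $(c',a'')$ must share a coordinate with $(c,a)$ and with $(c,a')$, so $a''=a$ and $a''=a'$; this contradicts $a\ne a'$. Hence $\sigma$ lies in the star-simplex of some $c$ or in the top-simplex of some $a$.

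\emph{Step 4 (part (iv)).} Every star-simplex at $c$ is by definition a subset of $\Sigma^{\mathrm{star}}_{\max}(\lambda,c)$, and every top-simplex at $a$ is a subset of $\Sigma^{\mathrm{top}}_{\max}(\lambda,a)$. Together with (i) and (ii), this shows that the maximal members of the two families are exactly the full ones.

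I expect the main obstacle to be Step 1. One must carefully justify that "transfer plus reordering" coincides with "move a removable corner to an addable corner". One must also justify the two-cell criterion for adjacency, including the degenerate transfers where the new row is created or a row of length $1$ disappears. Once that dictionary is in place, Steps 2–4 are short.
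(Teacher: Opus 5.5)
Your argument for (i)--(iii) is correct, but it takes a genuinely different route: the paper gives no proof of this proposition. It imports all four items from \cite{partition-topology} (Def.~2.5, Thm.~3.6, Props.~3.7--3.8). The only consequence used later, in Proposition~\ref{prop:dimloc-capacity}, is that every simplex through $\lambda$ lies in some $\Sigma^{\mathrm{star}}_{\max}(\lambda,c)$ or $\Sigma^{\mathrm{top}}_{\max}(\lambda,a)$.

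You instead derive everything from the criterion $\lambda\sim\mu\iff|\lambda\triangle\mu|=2$. With it, (i) and (ii) are one-line checks. Part (iii) reduces to the observation that the pairs $(c_i,a_i)$ of a clique are pairwise intersecting edges of the bipartite graph on $\Rem(\lambda)\sqcup\Add(\lambda)$. Such edges must form a star, since a bipartite graph has no triangles; this also covers the case of pairwise distinct $c_i$, which you leave implicit.

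Your route makes the capacity formula independent of the topology paper and shows the star/top dichotomy is elementary. The citation instead ties these notions to the framework in which they were used to compute the homotopy type of $K_n$.

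Step~1 is easier than you fear if you pass to conjugates. Replacing a part $p$ by $p-1$ and a part $q$ by $q+1$ lowers $\lambda^\ast_p$ by one and raises $\lambda^\ast_{q+1}$ by one. So the diagram loses one cell in column $p$ and gains one in column $q+1$, with $\mu=\lambda$ exactly when $p=q+1$. This covers created and vanishing rows uniformly. For the converse of your criterion (used in Step~2): a single row can only gain or only lose cells. So the two differing cells lie in different rows, whose lengths change by $-1$ and $+1$, which is literally an elementary transfer.

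One caveat concerns Step~4. What you prove is that every star-simplex (resp.\ top-simplex) lies in a full one, so every maximal member is full. The word \emph{exactly} would also require every full simplex to be maximal, and that fails in degenerate cases.
\begin{itemize}
\item Take $\lambda=(2,2,1)$, let $c$ be the removable cell in row $3$, and let $a$ be the addable cell in row $1$. Then $A_{\max}(\lambda,c)=\{a\}$, while $C_{\max}(\lambda,a)$ contains both removable corners. So the full star-simplex $\{(2,2,1),(3,2)\}$ is strictly contained in the full top-simplex $\{(2,2,1),(3,1,1),(3,2)\}$.
\item Take $\lambda=(2,1)$ and let $a$ be the addable cell in row $2$. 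Then $C_{\max}(\lambda,a)=\varnothing$, so that full top-simplex is just $\{\lambda\}$, which lies inside the edge $\{(2,1),(3)\}$.
\end{itemize}
Under the reading ``within each family, ignoring degenerate simplices,'' the missing half is easy but still needs saying. Full stars at different $c$ share only $\lambda$, because the pair $(c,a)$ is determined by the neighbour. Either way, (iv) is provable only in the containment form you establish, and that is precisely the form Proposition~\ref{prop:dimloc-capacity} relies on.
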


\begin{proposition}\label{prop:dimloc-capacity}
For every partition $\lambda\vdash n$, one has
\[
\dimloc(\lambda)=
\begin{cases}
0, & \lambda=(1),\\[4pt]
\max\bigl(1,s(\lambda),t(\lambda)\bigr), & n\ge 2.
\end{cases}
\]
Equivalently, for every $r\ge 2$,
\[
\dimloc(\lambda)\ge r
\iff
s(\lambda)\ge r \text{ or } t(\lambda)\ge r.
\]
\end{proposition}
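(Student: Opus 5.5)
The plan is to prove the formula by establishing two inequalities, using \Cref{prop:imported-star-top} for the upper bound and the explicit full simplices for the lower bound. The case $\lambda=(1)$ is exactly \Cref{prop:zero-layer}, so assume $n\ge 2$ from now on.

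\textbf{Lower bound.} By \Cref{prop:zero-layer}, $\lambda$ has a neighbour, so $\dimloc(\lambda)\ge 1$. Choose $c\in\Rem(\lambda)$ with $|A_{\max}(\lambda,c)|=s(\lambda)$. By \Cref{prop:imported-star-top}(i), $\Sigma^{\mathrm{star}}_{\max}(\lambda,c)$ is a simplex of $K_n$ containing $\lambda$. Its vertices $\lambda(c\to a)$ for distinct admissible $a$ are pairwise distinct, because the diagram $\lambda(c\to a)$ determines the added cell $a$. By admissibility they are also distinct from $\lambda$. Hence this simplex has dimension $s(\lambda)$. The symmetric argument with \Cref{prop:imported-star-top}(ii) applies to a maximizing addable corner $a$: the vertices $\lambda(c\to a)$ for distinct $c$ are distinct, since the diagram determines the removed cell. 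This gives $\dimloc(\lambda)\ge t(\lambda)$. Both maxima are taken over nonempty sets, since $\Add(\lambda)$ is always nonempty and $\Rem(\lambda)$ is nonempty for $n\ge 1$, so $s$ and $t$ are well defined.

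\textbf{Upper bound.} Let $\sigma$ be a simplex of $K_n$ containing $\lambda$ of dimension $\dimloc(\lambda)$. By \Cref{prop:imported-star-top}(iii), $\sigma$ lies in some star-simplex or top-simplex through $\lambda$. By (iv), every simplex in these families is contained in a maximal one, and the maximal ones are full star-simplices $\Sigma^{\mathrm{star}}_{\max}(\lambda,c)$ or full top-simplices $\Sigma^{\mathrm{top}}_{\max}(\lambda,a)$. These have dimension $|A_{\max}(\lambda,c)|\le s(\lambda)$ and $|C_{\max}(\lambda,a)|\le t(\lambda)$ respectively. Therefore $\dim\sigma\le\max(s(\lambda),t(\lambda))$, and combining with the lower bound gives $\dimloc(\lambda)=\max(1,s(\lambda),t(\lambda))$.

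\textbf{The equivalence and the main obstacle.} For $r\ge 2$ the constant $1$ in the maximum is irrelevant, so $\max(1,s,t)\ge r$ if and only if $s\ge r$ or $t\ge r$; this is the stated equivalence. The main point needing care is the use of (iii)--(iv). I must check that ``star-simplex through $\lambda$'' in the imported statement means a subset of the form $\{\lambda\}\cup\{\lambda(c\to a):a\in A\}$ with $A\subseteq A_{\max}(\lambda,c)$, and similarly for top-simplices. Then containment in a full one is automatic, and the dimension bound follows directly from counting vertices. The distinctness of the vertices in the lower bound is a routine check.
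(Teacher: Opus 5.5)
Your proposal is correct and follows essentially the same route as the paper: parts (iii)--(iv) of the imported star/top structure bound every simplex through $\lambda$ by a full star- or top-simplex, parts (i)--(ii) show that these bounds are attained, and the edge supplied by the zero-layer proposition gives the constant $1$. Your explicit check that the vertices $\lambda(c\to a)$ are pairwise distinct fills in a point the paper only asserts. That check is what guarantees the full simplices have dimension exactly $|A_{\max}(\lambda,c)|$ and $|C_{\max}(\lambda,a)|$.
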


\begin{proof}
Fix $\lambda\vdash n$. By Proposition~\ref{prop:imported-star-top}, every simplex of $K_n$
containing $\lambda$ is contained in one of the simplices
\[
\Sigma^{\mathrm{star}}_{\max}(\lambda,c),\qquad
\Sigma^{\mathrm{top}}_{\max}(\lambda,a).
\]
Hence the largest simplex dimension through $\lambda$ is attained by one of these simplices.
Their dimensions are exactly $|A_{\max}(\lambda,c)|$ and $|C_{\max}(\lambda,a)|$, respectively.

For $n=1$, the graph consists of a single vertex, so $\dimloc((1))=0$. For $n\ge 2$,
Proposition~\ref{prop:zero-layer} shows that every vertex lies on an edge, so $\dimloc(\lambda)\ge 1$.
Therefore
\[
\dimloc(\lambda)=\max\bigl(1,s(\lambda),t(\lambda)\bigr).
\]
The reformulation for $r\ge 2$ is immediate.
\end{proof}

\begin{corollary}\label{cor:layer-criteria}
For every $n\ge 2$ one has
\[
L_1(n)=V(G_n)\setminus L_{\ge 2}(n),
\]
and for every $r\ge 2$,
\[
L_{\ge r}(n)=\{\lambda\vdash n:s(\lambda)\ge r \text{ or } t(\lambda)\ge r\}.
\]
Equivalently,
\[
L_r(n)=L_{\ge r}(n)\setminus L_{\ge r+1}(n)
\qquad (r\ge 2).
\]
In particular,
\[
L_r(n)=\{\lambda\vdash n:\max(1,s(\lambda),t(\lambda))=r\}
\qquad (n\ge 2,\ r\ge 1).
\]
\end{corollary}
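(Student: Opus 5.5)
The corollary is a direct unpacking of Proposition~\ref{prop:dimloc-capacity} together with the layer partition, so the plan is to derive each displayed identity in turn from the capacity formula, working throughout with $n\ge 2$.

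\emph{Step 1: $L_1(n)$.} By Proposition~\ref{prop:zero-layer}, $L_0(n)=\varnothing$ for $n\ge 2$. The layer partition proposition gives $V(G_n)=L_0(n)\sqcup L_1(n)\sqcup L_{\ge 2}(n)$. Hence $L_1(n)=V(G_n)\setminus L_{\ge 2}(n)$.

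\emph{Step 2: $L_{\ge r}(n)$ for $r\ge 2$.} This is the equivalence $\dimloc(\lambda)\ge r \iff s(\lambda)\ge r$ or $t(\lambda)\ge r$ from Proposition~\ref{prop:dimloc-capacity}. Its content is that $\max(1,s,t)\ge r$ with $r\ge 2$ forces the maximum to be attained by $s$ or $t$ rather than by the constant $1$. The identity $L_r(n)=L_{\ge r}(n)\setminus L_{\ge r+1}(n)$ then follows from the disjoint-union description $L_{\ge r}(n)=\bigsqcup_{s\ge r}L_s(n)$, since removing all layers of index at least $r+1$ leaves exactly $L_r(n)$.

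\emph{Step 3: the closing formula.} For $n\ge 2$ and $r\ge 1$, substitute $\dimloc(\lambda)=\max(1,s(\lambda),t(\lambda))$ from Proposition~\ref{prop:dimloc-capacity} directly into the definition $L_r(n)=\{\lambda:\dimloc(\lambda)=r\}$. For $r=1$ this agrees with Step~1: $\max(1,s,t)=1$ exactly when $s,t\le 1$, i.e.\ when $\lambda\notin L_{\ge 2}(n)$.

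There is no real obstacle here. The only point needing care is keeping track of the $n\ge 2$ hypothesis, so that the formula $\max(1,s,t)$ applies and the degenerate vertex $(1)$ is excluded.
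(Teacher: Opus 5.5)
Your proposal is correct and takes the same approach as the paper, which treats the corollary as a direct restatement of Proposition~\ref{prop:dimloc-capacity}. Your three steps are the explicit unpacking of that restatement: $L_0(n)=\varnothing$ together with the layer partition gives $L_1(n)$, the $r\ge 2$ equivalence gives $L_{\ge r}(n)$, and substituting $\max(1,s,t)$ into the definition of $L_r(n)$ gives the closing formula.
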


\begin{proof}
This is a direct restatement of Proposition~\ref{prop:dimloc-capacity}.
\end{proof}

The first few layers may be interpreted as follows:
\begin{itemize}[leftmargin=2em]
\item $L_1(n)$ consists of those vertices that lie on edges but on no triangle;
\item $L_2(n)$ consists of those vertices that lie on triangles but on no tetrahedron;
\item $L_3(n)$ consists of those vertices that lie on tetrahedra but on no $4$-simplex;
\item and so on.
\end{itemize}

\subsection{Threshold reduction}

\begin{proposition}\label{prop:threshold-reduction}
For every $r\ge 2$,
\[
\tau_{\ge}(r)
=
\min\bigl\{
n:\exists\lambda\vdash n \text{ with } s(\lambda)\ge r \text{ or } t(\lambda)\ge r
\bigr\}.
\]
Equivalently, $\tau_{\ge}(r)$ is the first value of $n$ for which there exists a partition
admitting at least $r$ admissible transfers with a common removable corner or at least $r$
admissible transfers with a common addable corner.
\end{proposition}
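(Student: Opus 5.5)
The plan is to chain the threshold reformulation with the local capacity formula; no new combinatorics is needed. By the Threshold reformulation proposition, $\tau_{\ge}(r)=\min\{n:L_{\ge r}(n)\neq\varnothing\}$, so it suffices to show, for every $n$ and every $r\ge 2$, that $L_{\ge r}(n)\neq\varnothing$ holds exactly when some $\lambda\vdash n$ satisfies $s(\lambda)\ge r$ or $t(\lambda)\ge r$. Taking the minimum over $n$ of two sets that coincide then gives the displayed formula. The same equivalence also settles whether the set is nonempty: the two sets of $n$ are identical, so one minimum exists exactly when the other does.

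\textbf{Step 1 ($n\ge 2$).} Here the equivalence is \Cref{cor:layer-criteria}, which gives
\[
L_{\ge r}(n)=\{\lambda\vdash n:s(\lambda)\ge r\text{ or }t(\lambda)\ge r\}
\]
for every $r\ge 2$. One set is nonempty exactly when the other is.

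\textbf{Step 2 ($n=1$).} This is the only place requiring attention, since \Cref{cor:layer-criteria} is stated only for $n\ge 2$. By \Cref{prop:zero-layer}, $\dimloc((1))=0<r$, so $L_{\ge r}(1)=\varnothing$. By the convention in \Cref{def:capacities}, $s((1))=t((1))=0<r$. Hence $n=1$ lies in neither set, and the equivalence holds for all $n\ge 1$.

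\textbf{Step 3 (the "equivalently" clause).} I unwind \Cref{def:capacities}. The condition $s(\lambda)\ge r$ means $|A_{\max}(\lambda,c)|\ge r$ for some $c\in\Rem(\lambda)$. That is, there are at least $r$ admissible transfers $\lambda(c\to a)$ sharing the removable corner $c$, and these are distinct since distinct $a$ give distinct diagrams. Symmetrically, $t(\lambda)\ge r$ means at least $r$ admissible transfers share a common addable corner $a$. Substituting this into the formula from Steps 1 and 2 gives the verbal restatement.

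I expect no genuine obstacle. The only points needing care are the $n=1$ boundary case in Step 2 and the bookkeeping that the minimum is taken over the same set on both sides.
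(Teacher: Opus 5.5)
Your proposal is correct and follows the paper's route: the paper simply declares the statement immediate from Corollary~\ref{cor:layer-criteria}, and your separate check of $n=1$ (where that corollary is not stated) and your unwinding of the verbal clause are the bookkeeping the paper leaves implicit. One cosmetic point: $\tau_{\ge}(r)=\min\{n:L_{\ge r}(n)\neq\varnothing\}$ is the definition of $\tau_{\ge}$ itself, not the content of the threshold reformulation proposition, so you do not need to cite that proposition.
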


\begin{proof}
Immediate from Corollary~\ref{cor:layer-criteria}.
\end{proof}

Thus the simplex stratification is governed by a local capacity theory: the higher layers are exactly
the loci where sufficiently large star or top configurations occur. This prepares the transition to
the study of phase boundaries.

\section{Phase boundaries and interface geometry}

Having identified the simplex layers through local capacities, we now study the interfaces between
adjacent capacity regimes. These interfaces are purely graph-theoretic objects: they record where
the local simplex dimension changes from $r$ to $r+1$ across an edge of $G_n$.

\subsection{Boundary decomposition and interface graphs}

\begin{proposition}\label{prop:boundary-decomposition}
For every $r\ge 0$,
\[
\partial_{r,r+1}(n)=B_r(n)\sqcup B^-_{r+1}(n),
\]
where the union is disjoint.
\end{proposition}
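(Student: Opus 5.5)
The statement has two parts: the set identity $\partial_{r,r+1}(n)=B_r(n)\cup B^-_{r+1}(n)$ and the disjointness of the union. The set identity holds by definition, since the phase boundary was defined exactly as this union. So the only thing to prove is that
\[
B_r(n)\cap B^-_{r+1}(n)=\varnothing .
\]

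I will deduce this from the Layer partition proposition. By the definition of phase boundaries, $B_r(n)\subseteq L_r(n)$ and $B^-_{r+1}(n)\subseteq L_{r+1}(n)$. Since $r\neq r+1$, the layers $L_r(n)$ and $L_{r+1}(n)$ are distinct blocks of the layer partition of $V(G_n)$. Concretely, a vertex in both would satisfy $\dimloc(\lambda)=r$ and $\dimloc(\lambda)=r+1$ at once, which is impossible because $\dimloc$ is single-valued. Hence the two boundary pieces are disjoint.

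For completeness, I will also remark that the alternative description of $\partial_{r,r+1}(n)$ agrees with this decomposition. A vertex of $L_r(n)\cup L_{r+1}(n)$ with a neighbour in the opposite layer lies in $B_r(n)$ if it belongs to $L_r(n)$, and in $B^-_{r+1}(n)$ if it belongs to $L_{r+1}(n)$. By the disjointness of layers, exactly one of these two cases occurs.

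I expect no genuine obstacle here. The only care needed is to keep the argument purely set-theoretic and not invoke any of the capacity results from Section~3.
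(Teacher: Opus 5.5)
Your proposal is correct and follows the paper's own argument: the identity holds by definition, and disjointness follows from $B_r(n)\subseteq L_r(n)$, $B^-_{r+1}(n)\subseteq L_{r+1}(n)$ and $L_r(n)\cap L_{r+1}(n)=\varnothing$. Your remark on the alternative description is a harmless addition.
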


\begin{proof}
This is immediate from the definition of $\partial_{r,r+1}(n)$ as the union of the two one-sided
boundary sets. The union is disjoint because $L_r(n)\cap L_{r+1}(n)=\varnothing$.
\end{proof}

\begin{definition}[Interface graph]\label{def:interface-graph}
For $r\ge 0$, define the \emph{interface graph} between $L_r(n)$ and $L_{r+1}(n)$ by
\[
\mathcal I_{r,r+1}(n):=
\bigl(L_r(n)\sqcup L_{r+1}(n),E_{r,r+1}(n)\bigr),
\]
where
\[
E_{r,r+1}(n):=
\bigl\{\{\lambda,\mu\}\in E(G_n):\lambda\in L_r(n),\ \mu\in L_{r+1}(n)\bigr\}.
\]
\end{definition}

This is a bipartite graph recording the edges across the transition
\[
L_r(n)\longleftrightarrow L_{r+1}(n).
\]

\begin{proposition}\label{prop:boundary-interface}
A vertex belongs to $\partial_{r,r+1}(n)$ if and only if it has degree at least one in the
interface graph $\mathcal I_{r,r+1}(n)$. In particular,
\[
\partial_{r,r+1}(n)=\varnothing
\iff
E_{r,r+1}(n)=\varnothing.
\]
\end{proposition}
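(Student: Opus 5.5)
The argument is a direct unwinding of the definitions of $\partial_{r,r+1}(n)$ and $\mathcal I_{r,r+1}(n)$. I will show the two inclusions, using the decomposition $\partial_{r,r+1}(n)=B_r(n)\sqcup B^-_{r+1}(n)$ from Proposition~\ref{prop:boundary-decomposition}.

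First, let $\lambda\in\partial_{r,r+1}(n)$. Then either $\lambda\in B_r(n)$ or $\lambda\in B^-_{r+1}(n)$.
\begin{itemize}
\item If $\lambda\in B_r(n)$, then $\lambda\in L_r(n)$ and there is a neighbor $\mu\in N_{G_n}(\lambda)\cap L_{r+1}(n)$. The edge $\{\lambda,\mu\}$ joins $L_r(n)$ to $L_{r+1}(n)$, so it lies in $E_{r,r+1}(n)$. Hence $\lambda$ has degree at least one in $\mathcal I_{r,r+1}(n)$.
\item If $\lambda\in B^-_{r+1}(n)$, the argument is symmetric: a neighbor in $L_r(n)$ gives an edge of $E_{r,r+1}(n)$ incident to $\lambda$.
\end{itemize}

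Conversely, suppose $\lambda$ is a vertex of $\mathcal I_{r,r+1}(n)$ incident to some edge $\{\lambda,\mu\}\in E_{r,r+1}(n)$. By definition of $E_{r,r+1}(n)$, one endpoint lies in $L_r(n)$ and the other in $L_{r+1}(n)$. These layers are disjoint, so exactly one of the following holds.
\begin{itemize}
\item $\lambda\in L_r(n)$ and $\mu\in L_{r+1}(n)$. Then $\mu\in N_{G_n}(\lambda)\cap L_{r+1}(n)$, so $\lambda\in B_r(n)$.
\item $\lambda\in L_{r+1}(n)$ and $\mu\in L_r(n)$. Then $\lambda\in B^-_{r+1}(n)$.
\end{itemize}
In either case $\lambda\in\partial_{r,r+1}(n)$. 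One point needs care here: $E_{r,r+1}(n)$ is written with an ordered-looking condition $\lambda\in L_r(n)$, $\mu\in L_{r+1}(n)$. I read this as a condition on the unordered edge, meaning some labelling of its endpoints satisfies it.

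For the ``in particular'' clause, suppose $\partial_{r,r+1}(n)\neq\varnothing$. Then some vertex has positive degree in $\mathcal I_{r,r+1}(n)$, so $E_{r,r+1}(n)\neq\varnothing$. Conversely, any edge in $E_{r,r+1}(n)$ has both of its endpoints of positive degree, so both lie in $\partial_{r,r+1}(n)$ by the equivalence just proved.

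No step is a real obstacle; the only subtlety is the reading of the endpoint condition noted above.
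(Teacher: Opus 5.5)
Your proof is correct and takes the same approach as the paper, which simply declares the statement ``immediate from the definitions''; you have just written out that unwinding explicitly. Your reading of the edge condition in $E_{r,r+1}(n)$ as holding for some labelling of the endpoints is the intended one.
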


\begin{proof}
Immediate from the definitions.
\end{proof}

\subsection{Capacity interpretation of phase boundaries}

\begin{proposition}\label{prop:boundary-capacity}
For every $r\ge 1$,
\[
\partial_{r,r+1}(n)
=
\left\{
\begin{array}{l}
\lambda\in V(G_n):\ \dimloc(\lambda)\in\{r,r+1\}\ \text{and there exists }\mu\sim\lambda,\\
\text{such that }\dimloc(\mu)\in\{r,r+1\}\setminus\{\dimloc(\lambda)\}
\end{array}
\right\}.
\]
\end{proposition}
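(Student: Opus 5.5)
The plan is to unwind the definitions of $B_r(n)$, $B^-_{r+1}(n)$ and $\partial_{r,r+1}(n)$ and prove the set equality by double inclusion. Write $R(n)$ for the right-hand side of the statement. No capacity theory is needed. The restriction $r\ge 1$ plays no role in the argument: it only places the statement in the regime where Corollary~\ref{cor:layer-criteria} lets one read the layers through $s$ and $t$.

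First, suppose $\lambda\in\partial_{r,r+1}(n)$. By Proposition~\ref{prop:boundary-decomposition}, either $\lambda\in B_r(n)$ or $\lambda\in B^-_{r+1}(n)$.

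In the first case, $\dimloc(\lambda)=r$. There is also a neighbour $\mu\sim\lambda$ with $\mu\in L_{r+1}(n)$, so $\dimloc(\mu)=r+1$. Since $\{r,r+1\}\setminus\{r\}=\{r+1\}$, this gives $\lambda\in R(n)$.

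The second case is symmetric. Here $\dimloc(\lambda)=r+1$ and some neighbour $\mu$ has $\dimloc(\mu)=r$, which again lies in $\{r,r+1\}\setminus\{\dimloc(\lambda)\}$.

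Conversely, let $\lambda\in R(n)$. Then $\dimloc(\lambda)\in\{r,r+1\}$, and again we split into two cases.

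If $\dimloc(\lambda)=r$, then $\lambda\in L_r(n)$. The hypothesis gives a neighbour $\mu$ with $\dimloc(\mu)\in\{r+1\}$, so $\mu\in N_{G_n}(\lambda)\cap L_{r+1}(n)$. Hence $\lambda\in B_r(n)$.

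If $\dimloc(\lambda)=r+1$, the same reasoning shows that $\lambda$ has a neighbour in $L_r(n)$. Hence $\lambda\in B^-_{r+1}(n)$.

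In either case $\lambda\in\partial_{r,r+1}(n)$, which completes the double inclusion.

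There is no real obstacle here. The only point requiring care is the set difference $\{r,r+1\}\setminus\{\dimloc(\lambda)\}$: it is a single value precisely because $\dimloc(\lambda)$ has already been constrained to lie in $\{r,r+1\}$. This is what makes it the correct ``opposite layer'' $L_{\mathrm{other}}$ from the equivalent form of the definition of phase boundaries.
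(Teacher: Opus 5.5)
Your proof is correct and takes the same approach as the paper, which simply notes that the statement is the definition of $\partial_{r,r+1}(n)$ rewritten in terms of local simplex dimension; your double-inclusion argument spells out that unwinding. Your remark that the hypothesis $r\ge 1$ is never used is also accurate.
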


\begin{proof}
This is just the definition of $\partial_{r,r+1}(n)$ rewritten in terms of $\dimloc$.
\end{proof}

Thus $\partial_{r,r+1}(n)$ is the graph-theoretic support of the edges across which the local simplex
dimension changes from $r$ to $r+1$.

\subsection{Boundary thresholds and geometric traces}

\begin{definition}[Boundary threshold]\label{def:boundary-threshold}
For $r\ge 0$, define the \emph{boundary threshold}
\[
\tau_{\partial}(r):=\min\{n:\partial_{r,r+1}(n)\neq\varnothing\},
\]
whenever the set is nonempty.
\end{definition}

Thus $\tau_{\ge}(r+1)$ records the first appearance of the higher layer, while
$\tau_{\partial}(r)$ records the first appearance of an actual transition edge between the two
consecutive layers.

We now introduce the language used to study where phase boundaries lie inside $G_n$. Here
$\Fw_n$ denotes the boundary framework (the main chain together with the two boundary edges),
$\Ax_n$ the set of self-conjugate vertices, $\Sp_n$ the thin spine, and $C_n^{(t)}$ the $t$-th
central neighborhood; see \cite{partition-growth,partition-axial}.

For each interface $\partial_{r,r+1}(n)$, define the induced traces
\[
\partial^{\Fw}_{r,r+1}(n):=\partial_{r,r+1}(n)\cap \Fw_n,
\qquad
\partial^{\Ax}_{r,r+1}(n):=\partial_{r,r+1}(n)\cap \Ax_n,
\qquad
\partial^{\Sp}_{r,r+1}(n):=\partial_{r,r+1}(n)\cap \Sp_n,
\]
and, more generally,
\[
\partial^{(t)}_{r,r+1}(n):=\partial_{r,r+1}(n)\cap C_n^{(t)}.
\]

\begin{definition}[Boundary concentration radius]\label{def:boundary-radius}
For each $r\ge 0$, define the \emph{boundary concentration radius} by
\[
\rho^{\partial}_{r,r+1}(n):=
\min\{t:\partial_{r,r+1}(n)\subseteq C_n^{(t)}\},
\]
whenever $\partial_{r,r+1}(n)\neq\varnothing$.
\end{definition}

The formalism developed above is exact, but it does not, by itself, specify where the interfaces
lie within the large-scale geometry of $G_n$. To address that question, as well
as the first appearance of higher layers and boundaries, we now turn to exhaustive computations in
small rank.

\section{Thresholds and computational evidence}

We now complement the exact structural results with a first computational study of the simplex
stratification. The purpose of this section is twofold: to determine the initial threshold values and layer
profiles explicitly, and to identify the clearest patterns suggested by the computations.

\subsection{Computational setup}

For each $n\le 30$, we constructed the graph $G_n$ explicitly and computed the local simplex
dimension of every vertex $\lambda\vdash n$ by exhaustive enumeration.
The computations were carried out by explicit enumeration in Python 3. In addition to the neighborhood-clique computation described below, the resulting values of $\dimloc$ were cross-checked against the capacity formula of Proposition~\ref{prop:dimloc-capacity}. The clique number of each induced neighborhood graph was computed by a Bron--Kerbosch branch-and-bound search with pivoting. The script used to generate the tables accompanies the present source files.

The computation proceeds as follows. First, we generate all partitions of $n$ and build the edge set
of $G_n$ by testing all elementary unit transfers between parts, followed by reordering. Next, for
each vertex $\lambda$, we form the induced subgraph on its neighborhood $N_{G_n}(\lambda)$ and compute
its clique number by a Bron--Kerbosch branch-and-bound search with pivoting. For $n\ge 2$, a clique of size $m$ in
$G_n[N_{G_n}(\lambda)]$ corresponds exactly to a clique of size $m+1$ in $G_n$ containing
$\lambda$, and hence to a simplex of dimension $m$ in $K_n$ containing $\lambda$. Therefore
\[
\dimloc(\lambda)=\omega\bigl(G_n[N_{G_n}(\lambda)]\bigr)
\qquad (n\ge 2),
\]
while $\dimloc((1))=0$ for $n=1$.

From these values we extracted:
\begin{itemize}[leftmargin=2em]
\item the maximal local simplex dimension $\Delta(n)$;
\item the layer sizes $|L_r(n)|$;
\item the exact-layer thresholds $\tau(r)$;
\item the boundary thresholds $\tau_{\partial}(r)$.
\end{itemize}
All numerical statements in this section refer to this exhaustive computation in the range $n\le 30$.

\subsection{First appearances of higher simplex layers}

The exact-layer thresholds obtained from the computation are
\[
\begin{aligned}
\tau(0)&=1, & \tau(1)&=2, & \tau(2)&=4, & \tau(3)&=7,\\
\tau(4)&=11, & \tau(5)&=16, & \tau(6)&=22, & \tau(7)&=29.
\end{aligned}
\]

Equivalently, the maximal local simplex dimension satisfies
\[
\Delta(n)=0,1,1,2,2,2,3,3,3,3,4,4,4,4,4,5,5,5,5,5,5,6,6,6,6,6,6,6,7,7
\]
for $1\le n\le 30$.

\begin{table}[ht]
\centering
\begin{tabular}{c|cccccccc}
\toprule
$r$ & 0 & 1 & 2 & 3 & 4 & 5 & 6 & 7 \\
\midrule
$\tau(r)$ & 1 & 2 & 4 & 7 & 11 & 16 & 22 & 29 \\
\bottomrule
\end{tabular}
\caption{First appearance thresholds for the simplex layers in the computed range.}
\label{tab:layer-thresholds}
\end{table}

The successive threshold gaps are
\[
1,2,3,4,5,6,7,
\]
which is consistent with the triangular formula; these gaps are precisely the successive increments of the triangular numbers
\[
\tau(r)=1+\frac{r(r+1)}{2}.
\]

\begin{conjecture}\label{conj:triangular-threshold}
For every $r\ge 0$,
\[
\tau(r)=1+\frac{r(r+1)}{2}.
\]
Equivalently,
\[
\Delta(n)=\max\Bigl\{r:\ 1+\frac{r(r+1)}{2}\le n\Bigr\}.
\]
\end{conjecture}

\subsection{First-occurrence families}

Let
\[
\delta_r:=(r,r-1,\dots,2,1)
\]
be the staircase partition of size $r(r+1)/2$.

For each $r=2,3,\dots,7$ in the computed range, the layer
\[
L_r(\tau(r))
\]
consists exactly of the partitions obtained from $\delta_r$ by adding one cell at one of its
addable corners. In particular,
\[
|L_r(\tau(r))|=r+1
\qquad (2\le r\le 7).
\]

For example,
\[
L_2(4)=\{(3,1),(2,2),(2,1,1)\},
\]
\[
L_3(7)=\{(4,2,1),(3,3,1),(3,2,2),(3,2,1,1)\},
\]
\[
L_4(11)=\{(5,3,2,1),(4,4,2,1),(4,3,3,1),(4,3,2,2),(4,3,2,1,1)\}.
\]

\begin{conjecture}\label{conj:first-family}
For every $r\ge 2$, the first-occurrence layer
\[
L_r(\tau(r))
\]
consists exactly of the partitions obtained from the staircase partition $\delta_r$ by adding one
cell at an addable corner.
\end{conjecture}

\subsection{Layer sizes at threshold values}

Table~\ref{tab:threshold-layer-sizes} records the layer profile at the first appearance of each new
maximal simplex dimension.

\begin{table}[ht]
\centering
\begin{tabular}{c|c|ccccccc}
\toprule
$n$ & $\Delta(n)$ & $|L_1(n)|$ & $|L_2(n)|$ & $|L_3(n)|$ & $|L_4(n)|$ & $|L_5(n)|$ & $|L_6(n)|$ & $|L_7(n)|$ \\
\midrule
4  & 2 & 2 & 3   & 0   & 0   & 0   & 0   & 0 \\
7  & 3 & 2 & 9   & 4   & 0   & 0   & 0   & 0 \\
11 & 4 & 2 & 19  & 30  & 5   & 0   & 0   & 0 \\
16 & 5 & 2 & 29  & 114 & 80  & 6   & 0   & 0 \\
22 & 6 & 2 & 40  & 268 & 489 & 196 & 7   & 0 \\
29 & 7 & 2 & 57  & 494 & 1725 & 1859 & 420 & 8 \\
\bottomrule
\end{tabular}
\caption{Layer sizes at the threshold values $\tau(r)$ in the computed range.}
\label{tab:threshold-layer-sizes}
\end{table}

One particularly stable feature is that
\[
|L_1(n)|=2
\qquad (4\le n\le 30),
\]
and these two vertices are exactly the two antenna vertices
\[
(n),\qquad (1^n).
\]
Thus the outermost simplex layer remains extremely thin throughout the computed range. The data further suggest that for all $n\ge 4$ the only vertices of local simplex dimension $1$ are the two antenna vertices.

\begin{conjecture}\label{conj:l1-antennas}
For every $n\ge 4$, the only vertices of local simplex dimension $1$ are the two antenna vertices
\[
(n),\qquad (1^n).
\]
Equivalently,
\[
L_1(n)=\{(n),(1^n)\}.
\]
\end{conjecture}

\subsection{Boundary thresholds}

The first computed boundary thresholds are
\[
\tau_{\partial}(1)=4,\qquad
\tau_{\partial}(2)=7,\qquad
\tau_{\partial}(3)=11,\qquad
\tau_{\partial}(4)=16,\qquad
\tau_{\partial}(5)=22,\qquad
\tau_{\partial}(6)=29.
\]

Thus, throughout the computed range,
\[
\tau_{\partial}(r)=\tau(r+1).
\]

\begin{table}[ht]
\centering
\begin{tabular}{c|cccccc}
\toprule
$r$ & 1 & 2 & 3 & 4 & 5 & 6 \\
\midrule
$\tau_{\partial}(r)$ & 4 & 7 & 11 & 16 & 22 & 29 \\
$\tau(r+1)$ & 4 & 7 & 11 & 16 & 22 & 29 \\
\bottomrule
\end{tabular}
\caption{Boundary thresholds in the computed range.}
\label{tab:boundary-thresholds}
\end{table}

The computations suggest that each new simplex layer first appears already in direct contact with the preceding one.

\begin{problem}\label{prob:boundary-threshold}
Determine whether
\[
\tau_{\partial}(r)=\tau(r+1)
\]
holds for all $r\ge 1$.
\end{problem}

\subsection{The framework and the outer simplex layers}

Recall that $\Fw_n$ denotes the boundary framework, i.e.\ the union of the main chain and the two boundary edges. The computed data up to $n=30$ show a clear concentration pattern on $\Fw_n$. Namely, throughout the computed range one has
\[
\Fw_n\subseteq L_1(n)\cup L_2(n),
\]
and more precisely
\[
\Fw_n\cap L_1(n)=\{(n),(1^n)\}.
\]
Thus, in the computed range, the two antenna vertices are the only framework vertices of local simplex
dimension $1$, while every other framework vertex has local simplex dimension $2$. Equivalently,
\[
\Fw_n\cap L_{\ge 3}(n)=\varnothing
\qquad (4\le n\le 30).
\]

This immediately implies that the framework does not support any higher simplex regime in the
computed range. In particular,
\[
\partial^{\Fw}_{r,r+1}(n)=\varnothing
\qquad (r\ge 3,\ 4\le n\le 30).
\]

The first two one-sided boundary traces admit an even sharper description. For all $4\le n\le 30$,
\[
B_1(n)\cap\Fw_n=\{(n),(1^n)\},
\qquad
B^-_2(n)\cap\Fw_n=\{(n-1,1),(2,1^{n-2})\}.
\]
Equivalently, the full framework trace of the first interface is
\[
\partial^{\Fw}_{1,2}(n)=\{(n),(1^n),(n-1,1),(2,1^{n-2})\}.
\]

Once the layer $L_3$ appears, the second framework trace occupies all framework vertices except the
four outermost ones: for every $7\le n\le 30$,
\[
\partial^{\Fw}_{2,3}(n)
=
\Fw_n\setminus\{(n),(1^n),(n-1,1),(2,1^{n-2})\}.
\]
Thus the framework supports the transition between $L_1$ and $L_2$ only in the vicinity of the
two antenna vertices, whereas the transition between $L_2$ and $L_3$ occupies all remaining
framework vertices as soon as $L_3$ appears.

Taken together, these observations suggest that the framework remains confined to the low-simplex
part of the graph. In particular, they support the broader geometric expectation that higher simplex
layers migrate away from the boundary framework toward the interior of $G_n$.

\begin{conjecture}\label{conj:framework-low-layers}
For every $n\ge 4$,
\[
\Fw_n\subseteq L_1(n)\cup L_2(n),
\qquad
\Fw_n\cap L_1(n)=\{(n),(1^n)\}.
\]
Equivalently, every non-antennal framework vertex has local simplex dimension exactly $2$.
\end{conjecture}

The computations therefore suggest a rigid threshold geometry, but at present they do not amount to
a proof of the global staircase pattern or of inward concentration for higher layers and their
boundaries. These questions are collected in the final section.

\section{Conclusion and open problems}

We have introduced a new global organization of the partition graph: its stratification by local
simplex dimension, together with the corresponding phase boundaries between consecutive layers.

The central structural point is that this stratification is globally defined but locally controlled:
the local simplex dimension is determined exactly by the maximal star and top capacities through the
vertex. As a consequence, the higher layers $L_{\ge r}(n)$ are characterized by explicit local
capacity inequalities, and the first appearance of a new simplex regime reduces to the first
appearance of sufficiently large local star/top configurations.

This provides a systematic global stratification of the partition graph by local simplex complexity.
The resulting picture is geometrically coherent: the graph is decomposed into regions supporting only
edges, then triangles, then tetrahedra, and so on, while the boundaries between these regimes
become well-defined graph-theoretic objects in their own right.

At the same time, the computational results indicate that this stratification is far from arbitrary.
The first appearance thresholds of higher layers seem to follow a staircase pattern, the first
vertices of a new layer are observed to arise from one-cell extensions of staircase partitions, and
the boundary framework remains confined to the first two layers in the computed range. These
observations motivate a number of further questions.

\begin{problem}[Threshold formula]\label{prob:threshold-formula}
Determine the exact threshold function
\[
\tau(r)=\min\{n:L_r(n)\neq\varnothing\}.
\]
In particular, is it true that
\[
\tau(r)=1+\frac{r(r+1)}{2}
\qquad (r\ge 0)?
\]
Equivalently, does one have
\[
\Delta(n)=\max\Bigl\{r:1+\frac{r(r+1)}{2}\le n\Bigr\}?
\]
\end{problem}

\begin{problem}[First-occurrence families]\label{prob:first-occurrence-family}
Assume $r\ge 2$, and let
\[
\delta_r=(r,r-1,\dots,2,1)
\]
be the staircase partition of size $r(r+1)/2$. Is it true that the first-occurrence layer
\[
L_r(\tau(r))
\]
consists exactly of the partitions obtained from $\delta_r$ by adding one cell at an addable
corner?
\end{problem}

\begin{problem}[Boundary thresholds]\label{prob:boundary-threshold-final}
Determine the boundary threshold function
\[
\tau_{\partial}(r)=\min\{n:\partial_{r,r+1}(n)\neq\varnothing\}.
\]
In particular, does one have
\[
\tau_{\partial}(r)=\tau(r+1)
\qquad (r\ge 1)?
\]
That is, does every new simplex layer first appear already in direct contact with the previous one?
\end{problem}

\begin{problem}[Geometry of phase boundaries]\label{prob:boundary-geometry}
Describe the structure of the interface graph
\[
\mathcal I_{r,r+1}(n)
\]
and the geometry of the boundary
\[
\partial_{r,r+1}(n).
\]
For example:
\begin{enumerate}[label=\textup{(\roman*)},leftmargin=2em]
\item When is $\partial_{r,r+1}(n)$ connected?
\item How large can it be relative to $|L_r(n)|$ and $|L_{r+1}(n)|$?
\item Can the jump in local simplex dimension across an edge be arbitrarily large, or is it
uniformly bounded?
\end{enumerate}
\end{problem}

\begin{problem}[Placement relative to large-scale structures]\label{prob:placement}
Study the placement of the higher simplex layers and their phase boundaries relative to the
framework $\Fw_n$, the axis $\Ax_n$, the spine $\Sp_n$, and the central neighborhoods $C_n^{(t)}$.
More specifically, determine the asymptotic behaviour of the traces
\[
L_r(n)\cap \Fw_n,\qquad
L_r(n)\cap \Ax_n,\qquad
L_r(n)\cap \Sp_n,
\qquad
\partial_{r,r+1}(n)\cap C_n^{(t)},
\]
and of the corresponding concentration radii. Do higher simplex layers and their boundaries migrate
inward as $n$ grows?
\end{problem}

\begin{problem}[Extremal families]\label{prob:extremal-families}
Describe the partitions that maximize the local simplex dimension:
\[
\Delta(n)=\max_{\lambda\vdash n}\dimloc(\lambda).
\]
Are the extremal vertices always near staircase shapes, or do other families eventually dominate?
\end{problem}

\begin{problem}[Interaction with global topology]\label{prob:topology}
Clarify the relation between the increasing local simplex complexity of the layers and the
comparatively simple global homotopy type of the clique complex $K_n$. In particular, can one
explain more conceptually why the growth of local simplex dimension does not lead to a comparable
increase in global topological complexity?
\end{problem}

The simplex-layer viewpoint introduced here is intended as a structural tool rather than as a final
description. It isolates a new level of organization in the partition graph --- between purely
local transfer data and the already studied global geometric skeleton --- and suggests that the
emergence of higher-dimensional local simplex regimes is governed by a rigid threshold
geometry.

\section*{Acknowledgements}

The author acknowledges the use of ChatGPT (OpenAI) for discussion, structural planning, and
editorial assistance during the preparation of this manuscript. All mathematical statements,
proofs, computations, and final wording were checked and approved by the author, who takes full
responsibility for the contents of the paper.


\begin{thebibliography}{99}

\bibitem{partition-topology}
F.~B.~Lyudogovskiy,
\emph{The homotopy type of the clique complex of the partition graph},
arXiv:2603.14370 [math.CO], 2026.

\bibitem{partition-local}
F.~B.~Lyudogovskiy,
\emph{Local Morphology of the Partition Graph},
arXiv:2603.18696 [math.CO], 2026.

\bibitem{partition-growth}
F.~B.~Lyudogovskiy,
\emph{The Partition Graph as a Growing Discrete Geometric Object},
arXiv:2603.21221 [math.CO], 2026.

\bibitem{partition-axial}
F.~B.~Lyudogovskiy,
\emph{Axial Morphology of the Partition Graph: Self-Conjugate Axis, Spine, and Concentration},
preprint, 2026.

\bibitem{Savage}
C.~D.~Savage,
\emph{Gray code sequences of partitions},
J. Algorithms 10 (1989), no.~4, 577--595.

\bibitem{RSW}
D.~Rasmussen, C.~D.~Savage, and D.~B.~West,
\emph{Gray code enumeration of families of integer partitions},
J. Combin. Theory Ser. A 70 (1995), no.~2, 201--229.

\bibitem{Mutze}
T.~M\"utze,
\emph{Combinatorial Gray codes---an updated survey},
Electron. J. Combin. 30 (2023), no.~3, DS26.

\bibitem{Bal}
H.~S.~Bal,
\emph{Lognormal degree distribution in the partition graphs},
arXiv:2202.09819, 2022.

\bibitem{Adamaszek}
M.~Adamaszek,
\emph{Clique complexes and graph powers},
Israel J. Math. 196 (2013), no.~1, 295--319.

\bibitem{GoyalShuklaSingh}
S.~Goyal, S.~Shukla, and A.~Singh,
\emph{Topology of clique complexes of line graphs},
The Art of Discrete and Applied Mathematics 5 (2022), no.~2, \#P2.06.

\end{thebibliography}
\end{document}